\newtheorem{theorem}{Theorem}
\newtheorem{definition}{Definition}
\newtheorem{proposition}{Proposition}
\newtheorem{lemma}[proposition]{Lemma}
\newtheorem{corollary}[proposition]{Corollary}
\newtheorem{remark}[proposition]{Remark}
\newtheorem{example}[proposition]{Example}
\title{Connected components of real $CB_{n}$ algebraic varieties}
\author{N. C. Combe}
 \address{Aix-Marseille University, I2M, CMI Technop\^ole Ch\^ateau-Gombert 39,rue F. Joliot Curie 13453 Marseille Cedex 13 France}
\email{noemie.combe@univ-amu.fr}
\thanks{The study was supported by a grant from the Labex Archimede and of the A*MIDEX project (ANR-11-IDEX-0001-02), funded by the ``Investissements d'Avenir'' French Government programme managed by the French National Research Agency (ANR).\\ 
I would like to thank Professor Norbert A'Campo for interesting discussions on this subject.\\ 
I would like to express all my gratitude to Professor Hanna Nencka for many valuable suggestions, discussions and comments.
}
\keywords{Coxeter group, invariant theory, connected components, matrix theory, real algebraic varieties}
\subjclass{ 14R20, 14Jxx, 14J70, 14L24, 15Axx, 15A03, 15A18 }
\date{} 
\begin{document}
\maketitle{}

\begin{abstract}
Connected components of real algebraic varieties invariant under the $CB_{n}$-Coxeter group are investigated. 
In particular, we consider their maximal number and their geometric and topological properties.
This provides a decomposition for the space of $CB_{n}$-algebraic varieties.
We construct $CB_{n}$-polynomials using Young-posets and partitions of integers. Our results establish bounds on the number of connected components for a given set of coefficients.
It turns out that this number can achieve an upper bound of $2^{n}+1$ for specific coefficients. We introduce a new method to characterize the geometry of these real algebraic varieties, using J. Cerf and A. Douady theory for varieties with angular boundary and the theory of chambers and galleries. We provide several examples that bring out the essence of these results.  
 \end{abstract}

\section{Introduction}
\subsection{Motivation}
At the end of the 19th century geometers such as Clebsch , Goursat, Schl$\ddot{a}$fli, Klein investigated the geometry and the topology of real and complex algebraic surfaces.

Recently, there were many advances in the geometrical description of these surfaces.

In this paper we consider algebraic sets of type: \[Z(f)=\{Y\in \mathbb{A}_{\mathbb{R}}^n | f(Y)=0 , f\in \mathbb{R}[V]^{CB_{n}}\}\] with $CB_{n}$ the Coxeter group; $V$ the real vector space $\mathbb{R}^n.$ 
 
 \hspace{1pt}

It is well known by Hilbert's Nullenstellensatz~\cite{Ha}~\cite{Cor} that if $k$ is an algebraically closed field, an algebraic set is irreducible if and only if its ideal is a prime ideal. Somehow, if $k$ is no longer an algebraically closed field, there are no well established criterions to determine the number of irreducible components of the algebraic set.

Therefore this paper proposes to consider the number of connected components for a degree $2^{p}$ variety $Z(f)$, ($p$ is a positive integer) using a new method relying on chambers from Coxeter groups, J. Cerf and A. Douady theory on varieties with angular boundaries~\cite{{Do},{Ce61}}. 

\subsection{Main statement}
We will in particular show the following statement:

\textit{
Let $f\in\mathbb{R}[V]^{CB_{n}}$ be a degree four polynomial. Then, there exist subsets of $\mathbb{R}^4$ to which the set of coefficients of the polynomials belong and for which:
\begin{enumerate}\label{E:2}
\item the maximal number of connected components of the $n$-ary $CB_{n}$-quartic variety is $2^{n}+1$.
\item the  maximal number of compact connected components of the $n$-ary $CB_{n}$-quartic variety is $2^{n}$.
 \item  the maximal number of nested compact connected components of the $n$-ary $CB_{n}$-quartic variety is $2$.
\end{enumerate}
 }
 \hspace{1pt}
 
 In particular by nested compact connected components we mean concentric disjoint connected components isomorphic to a couple of disjoint concentric $n-1$-spheres.  

The next subsections are devoted to recalling some material for the convenience of the reader. 
We give a short overview on chambers and galleries and as well on $k$-sectors.

\subsection{Background}

Let us shortly recall some notions about invariant theory~\cite{Ol} and Coxeter groups theory~\cite{Bki}.
 The terminology used in this paper is defined in  \cite{Ol} and \cite{Bki}, \cite{Hu}. We will use these tools in our method to prove the results above. 
\subsubsection{Polynomials invariant under groups}

If $G$ is a finite group and $\rho: G \to GL_{n}(V)$ its representation, then $G$ acts on the vector space $V$ through linear transformation and this action extends to the ring of polynomials $\mathbb{R}[V]$ by the formula~\cite{Smi}
 \[ (gf)(v):=f(\rho( g^{-1}\ v)), \forall v\in V .\]
By $G$-invariant polynomials we mean a polynomial in the algebra $\mathbb{R}[V]^{G}$ defined by: 
\[ \mathbb{R}[V]^{G}:=\{f\in \mathbb{R}[V] \mid gf=f, \forall g\in G\} \] for a given group $G$. 
Note that the product and sum of two invariant polynomials under $G$ is invariant under $G$.
 A polynomial is invariant if and only if all its homogeneous components are invariant. 
 Moreover, $\mathbb{R}[V]^{G}$ is a graded algebra over $\mathbb{R}$.

\vspace{3pt}
\subsubsection{Chambers and galleries}

Let  $S=\{s_{1},\dots,s_{n}\}$ be a set of independent generators of a finite group $G$.  Let $\mathbf{m} =(m_{ij})_{1\leq i, j\leq n}$ be a symmetric $n\times n$ matrix with entries from $\mathbb{N}\cup \{\infty\}$ such that $m_{ii}=1$ for all $i\in \{1,\dots,n\}$  and $m_{ij}\geq 2$ whenever $i\ne j$.
The Coxeter group of type $\mathbf{m}$ is the group:
\[
G(\mathbf{m})=\langle \{s_{1},\dots,s_{n}\} \mid (s_{i}s_{j})^{m_{ij}}=1\ \forall i,j \in  \{1,\dots,n\},m_{ij}<\infty.\rangle.
\]
 The pair $(G,S)$ is called a Coxeter system with rank $ \#S$, note that  this pair is not unique.
 
 The graph associated to $(G,S)$ is the labeled graph with set of vertices $S$ and edges $\{s_{i},s_{j}\}$, labeled by $m_{i,j}$, when $m_{i,j}\geq 3$.

In the following we are interested by finite groups generated by reflections, more precisely by Coxeter finite groups.
 
Consider the finite reflection group $G$ in $V$, which is a finite subgroup of $GL_{n}(V)$, generated by reflections, where 
by reflection of $V$ we mean an automorphism of order 2 whose set of fixed points is an hyperplane. 
The reflection representation of $G$ is obtained as follows.

Define a bilinear form  on $V$ by $B_{M}(e_{s_{i}},e_{s_{j}})=-cos(\frac{\pi}{m_{i,j}})$ where $e_{s_{i}},e_{s_{j}}$ are the vectors of the canonical basis of $V$. The reflection on $V$ is given by: $\rho_{s}(x)=x-2B_{M}(e_{s},x)e_{s} $. The map $s\to\rho_{s}$ extends to an injective group morphism, $G\to GL_{n}(V)$, the reflection representation of $G$, see \cite{Bki} (Chap V, section 4.3 and 4.4). The reflection hyperplane is called a mirror.

Denote by $M_{G}$ the set of mirrors of $G$.
The connected components of the set $V-\cup_{H\in M_{G}} H$ are the chambers of $G$. The group $G$ acts simply transitively on the set of chambers \cite{Bki} (sectionV.3 theorem 1.2). 

By gallery of length $n$ we intend a sequence of adjacent chambers, i.e such that the intersection  of the closures of the chambers has codimension 1.
The closure of a chamber is a fundamental domain for the action of $G$ on $V$. 

\subsection{Sectors and faces}
Tools form J. Cerf's theory~\cite{Ce61} that we use are the following. 

\vspace{3pt}

We call a $k$-sector for the real vector space $V$ the subset of $V$ given by linear forms $f_{i}$ linearly independent,
such that $\{f_{1}\geq 0, \dots, f_{k}\geq 0\}$.

A codimension $j$-face is a set of $j$ equalities $\{f_{i_{1}}=0, \dots,f_{i_{j}}=0\}$ where $j\leq k$.
The index of a $k$-sector at the point 0  is the greatest codimension of the faces containing the point 0.

\begin{remark}
We interpret a chamber of the $CB_{n}$-Coxeter group as an $n$-sector with index $n-1$ at  0 and study the part of the algebraic variety which is in this fundamental domain.
The $CB_{n}$-algebraic variety is then obtained using reflections.
\end{remark}


\section{Polynomials invariant under $CB_{n}$ group}
In the following part, a method for the construction of polynomials $P_{d}\in \mathbb{R}[V]^{CB_{n}}$ is given.

The hyper-octahedral group $CB_{n}$ is the group of $n \times n$ monomial matrices with non-zero entries in $\{-1,1\}$, that is the signed permutation group.
 It is the symmetry group of the hypercube and of the dual cross-polytope.

The method to construct polynomials $P_{d}\in \mathbb{R}[V]^{CB_{n}}$ is as follows.
The key idea resides in the following theorem : Every $CB_{n}$-invariant polynomial in $k[x_{1}, \dots, x_{n}]$ can be written uniquely as a polynomial in the algebraically independent elementary polynomials $\sigma_{1}, \dots, \sigma_{n}\in k[x_{1},\dots, x_{n}]$: 

\[\begin{aligned}
\sigma_{1}&=\sum_{i=1}^{n}x_{i}^2\\
\sigma_{2}&=\sum_{i<j}x_{i}^2x_{j}^2\\
& \dots \\
\sigma_{r}&=\sum_{i_{1}< i_{2}<\dots i_{r}}x_{i_{1}}^2x_{i_{2}}^2\dots x_{i_{r}}^2 \\
& \dots \\
\sigma_{n}&=\prod_{i=1}^n x^2_{i}\\
\end{aligned}
\]
Notice that this choice is not unique.

To generate any homogeneous polynomial of degree $d=2q$ invariant under $CB_{n}$ with generators $\sigma_{i}$, it is useful to consider the partition $\lambda \dashv q$ of the integer $q$. The integers $\lambda=(\lambda_{1},\dots, \lambda_{k})$ correspond to the indexes of the elementary polynomials. Therefore, to obtain all the possible polynomials, we construct all the Young diagrams representing the partition of this integer, where we identify the partitions of type $q=i+j=j+i$, with $i$ and $j$ non zero.

The numbers of all the partitions $p(q)$ are given by the generating function  \[ \sum_{q=0}^{\infty}p(q)x^{q}=\prod_{k=1}^{\infty}\frac{1}{1-x^k}.\]

\begin{table}[h]
 \begin{tabular}{c|lllll|lllll}
  d=2q &\multicolumn{5}{c|}{Young  diagrams}&\multicolumn{5}{c}{Span of the vector space $\mathbb{R}[V]^{CB_{n}}$ }\\
&& && && & & &&\\
  2 &  $\tiny\yng(1)$ & & & && $\sigma_{1} $& & &&\\
  4 &$ \tiny\yng(2) $ & $\tiny\yng(1,1)$  & & && $\sigma_{2}$ & $ \sigma_{1}^2$ &  && \\
  6 & $\tiny\yng(3)$& $\tiny\yng(2,1)$& $\tiny\yng(1,1,1)$& &&  $\sigma_{3}$ & $\sigma_{2}\sigma_{1}$& $\sigma_{1}^3$ &&\\
  8 &$\tiny\yng(4)$& $\tiny\yng(3,1)$& $\tiny\yng(2,1,1)$& $\tiny\yng(2,2)$& $\tiny\yng(1,1,1,1)$& $\sigma_{4}$&$\sigma_{3}\sigma_{1}$&$ \sigma_{2}\sigma_{1}^2$&$\sigma_{2}^2$&$\sigma_{1}^4 $ \\    
 \dots &  \dots & \dots &  \dots& \dots& \dots & \dots & \dots &  \dots&  \dots& \dots\\
 \end{tabular}
 \caption{Homogeneous linearly independent $CB_{n}$-invariant polynomials associated to the $\sigma_{i} $}
 \end{table}

We are intersted in quartic polynomials $f\in \mathbb{R}[V]^{CB_{n}}$. 
These polynomials are a linear combination of the homogeneous polynomials $\sigma_{1}, \sigma_{1}^2, \sigma_{2}:{\tiny\yng(1)},\  {\tiny\yng(1,1)},\ {\tiny\yng(2)}, $ :
 \begin{equation}\label{E:1}
 \begin{aligned}
  f(x_{1},\dots x_{n})& = A\sigma_{2}  + B\sigma_{1}^2+ C\sigma_{1} +D \\ 
 &=A \sum_{i<j=1}^{n} x_{i}^2x_{j}^2 + B\left(\sum_{i=1}^{n} x_{i}^2\right)^2+ C\sum_{i=1}^{n} x_{i}^2 +D,\quad \{A,B,C,D\}\in \mathbb{R}.
 \end{aligned}
 \end{equation}
 
The zero-locus of such polynomials gives an $n$-dimensional manifold with $n^2$ symmetry $(n-1)$-planes.
Therefore it is sufficient to study the geometry of a given surface in a fundamental domain.
  
\section{Number of connected components and decomposition of $CB_{n}$ quartic varieties}

In this part, we give our results on the number of connected components for a quartic invariant by $CB_{n}$ group. The proofs are in the following section. 
\begin{definition}
We call geometric characteristic of the zero locus of a polynomial, 
a connected set of polynomials such that their set of connected components in the Euclidean space are equivalent up to isotopy. 
\end{definition}

\begin{theorem}[Upper bound on the number the connected components]

Suppose that $ f\in\mathbb{R}[V]^{CB_{n}}$ and that $f$ is of degree four. 
Then there exist sets of polynomials with given geometric characteristic such that:
\begin{enumerate}\label{E:2}
\item the maximal number of connected components is $2^{n}+1$.
\item the maximal number of compact connected components  is $2^{n}$
 \item the maximal number of nested connected components is $2$.
\end{enumerate}
\end{theorem}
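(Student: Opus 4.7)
The plan is to reduce the quartic to a plane conic via the substitution $y_i = x_i^2$, which kills the sign-change subgroup of $CB_n$. Using $2\sum_{i<j}y_iy_j = s^2 - u$ for $s = \sum y_i$ and $u = \sum y_i^2$, the polynomial becomes $g(s,u) = (A/2+B)s^2 - (A/2)u + Cs + D$, so $\{f = 0\}$ is the preimage, under $(y_1,\ldots,y_n)\mapsto(s,u)$, of a plane curve $\Gamma$ which is generically a parabola $u = as^2 + bs + c$ (or a pair of vertical lines when $A = 0$). By Cauchy--Schwarz and the power-mean inequality, $(s,u)$ maps the orthant $\mathbb{R}^n_{\geq 0}$ onto the wedge $\Omega = \{s \geq 0,\ s^2/n \leq u \leq s^2\}$; the upper edge $u = s^2$ is the image of the coordinate axes and the lower edge $u = s^2/n$ is the image of the diagonal $y_1 = \cdots = y_n$. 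Working inside the fundamental $CB_n$-chamber $\mathcal{C} = \{x_1 \geq \cdots \geq x_n \geq 0\}$, the fibres of the moment map are connected ($S_n$-fundamental domains of spheres $S^{n-2}$), so connected components of $\{f=0\} \cap \mathcal{C}$ correspond bijectively to connected arcs of $\Gamma \cap \Omega$.

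Next, I would convert arc-type into a component-count using the Cerf--Douady theory for varieties with angular boundary together with the chamber-gallery structure recalled in the introduction. An arc meeting the upper edge of $\Omega$ corresponds to a component of $\{f=0\} \cap \mathcal{C}$ that touches the $n-1$ chamber faces $\{x_2 = x_3\}, \ldots, \{x_{n-1}=x_n\}, \{x_n=0\}$; an arc meeting the lower edge corresponds to a component touching the $n-1$ faces $\{x_1 = x_2\}, \ldots, \{x_{n-1}=x_n\}$. Gluing the $CB_n$-orbit of such an arc along exactly the touched-face reflections, the contribution to the total count is $[CB_n : W_K]$, where $W_K$ is the parabolic subgroup generated by those reflections. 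The three relevant contributions are therefore $1$ (arc touches both edges, $W_K = CB_n$), $2^n$ (only the lower edge, $W_K = S_n$), and $2n$ (only the upper edge, $W_K = CB_{n-1}$).

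The main obstacle is the upper bound. Since $\Gamma$ and each edge of $\Omega$ are plane conics, Bezout in the affine plane gives at most two intersection points of $\Gamma$ with each edge, hence at most two connected arcs of $\Gamma \cap \Omega$; furthermore the identity $p_2(s) - p_1(s) = (1 - 1/n)s^2 \geq 0$, where $p_1$ and $p_2$ are the quadratics detecting upper- and lower-edge crossings respectively, constrains the relative position of their roots and rules out the configuration ``arc touching only the upper edge $+$ arc touching only the lower edge'' which would otherwise contribute $2n + 2^n$. A case check on the remaining 2-arc configurations shows the maximum total equals $2^n + 1$, realised by one arc touching both edges (contribution $1$) together with one unbounded arc touching only the lower edge (contribution $2^n$); this proves statement~(1). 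For~(2), restricting to bounded arcs the top contribution is a single arc bounded by two lower-edge crossings giving $2^n$ compact components, realised explicitly by $f = \sum_i (x_i^2 - 1)^2 - \epsilon$ whose curve $\Gamma$ is the line $u = 2s - n + \epsilon$, and whose zero set is the union of $2^n$ small spheres around the hypercube vertices $(\pm 1, \ldots, \pm 1)$. For~(3), nested compact components correspond to arcs of the first type (touching both edges) with nested $s$-intervals: the example $f = (\sigma_1 - r_1^2)(\sigma_1 - r_2^2)$ gives $\Gamma$ as the two vertical lines $s = r_1^2,\ s = r_2^2$ and realises two concentric $(n-1)$-spheres, while the two-arc cap above prevents a third.
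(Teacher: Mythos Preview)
Your route is genuinely different from the paper's. The paper applies the substitution $x_i\mapsto x_i^{1/2}$ on the positive orthant, obtains the quadric $F(X)=A\sum_{i<j}X_iX_j+B(\sum X_i)^2+C\sum X_i+D$, computes the eigenvalues of its matrix $\Lambda_0$ to see that it is a hypersurface of revolution about the diagonal $\{X_1=\cdots=X_n\}$, and then treats the three items by choosing $F$ to be, respectively, a two--sheeted hyperboloid with strictly positive centre and acute opening, an ellipsoid avoiding the coordinate hyperplanes, and an ellipsoid meeting all the chamber walls; the component count in $\mathbb{R}^n$ is then read off via Lemma~8 on reflections. Your $(s,u)$--reduction is exactly the profile of that surface of revolution, so you are packaging the same geometry one dimension lower; this is cleaner and your explicit existence examples (the parabola with one arc through both edges plus one unbounded lower--edge arc for $2^n+1$; $\sum(x_i^2-1)^2-\epsilon$ for $2^n$; $(\sigma_1-r_1^2)(\sigma_1-r_2^2)$ for two nested spheres) are correct and more transparent than the paper's eigenvalue discussion. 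Note, however, that the paper's proof establishes only \emph{achievability} of the three numbers; the genuine upper bound is asserted separately for $n=3$ (Proposition~4) with an external reference, so your Bezout argument is attempting strictly more than the paper proves.

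That extra part has a gap. For $n\geq 3$ the map $(x_1,\dots,x_n)\mapsto(s,u)$ restricted to the chamber $\mathcal{C}$ has fibres of dimension $n-2$, and these fibres already meet the walls $\{x_i=x_{i+1}\}$ over \emph{every} interior point of $\Omega$, and meet $\{x_n=0\}$ over the sub-region $\{u\geq s^2/(n-1)\}$. Hence the set of chamber walls touched by the preimage of an arc $\gamma\subset\Gamma\cap\Omega$ is \emph{not} determined by which edges of $\Omega$ the arc meets: an arc lying entirely in the interior of $\Omega$, or one touching only the lower edge but passing through $\{u>s^2/(n-1)\}$, will have its chamber--component touch $\{x_n=0\}$ as well, collapsing the contribution from $2^n$ to $1$. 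Your trichotomy $1,\ 2^n,\ 2n$ therefore does not exhaust the cases, and the ``case check'' you invoke to bound the two--arc configurations would need the finer stratification of $\Omega$ by the parabolas $u=s^2/k$, $k=1,\dots,n$, together with a treatment of unbounded arcs. This does not affect the existence claims the theorem actually requires, but it does leave the asserted upper bound unproven.
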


\begin{corollary}
If the $n$-ary quartic has $2^{n}+1$ connected components, then for the induced topology from $\mathbb{R}^n$, there exists a unique compact connected component with center at the origin. The $2^{n}$ other connected components have infinite diameter.
\end{corollary}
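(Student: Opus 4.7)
The plan is to reduce the problem to a one-dimensional picture in the plane of the two elementary $CB_n$-invariants $S:=\sigma_1$ and $Q:=\sigma_2$. Since $f=A\sigma_2+B\sigma_1^2+C\sigma_1+D$ factors through the polynomial map $\Phi:=(\sigma_1,\sigma_2):\mathbb{R}^n\to\mathbb{R}^2$, the variety $Z(f)$ coincides with $\Phi^{-1}(\Gamma\cap\mathcal{R})$, where $\Gamma$ is the parabola $\{AQ+BS^2+CS+D=0\}$ and, by the Newton-type inequality $\sum y_i^2\ge\tfrac1n(\sum y_i)^2$, the image $\mathcal{R}:=\Phi(\mathbb{R}^n)$ is the region $\{S\ge 0,\;0\le Q\le\tfrac{n-1}{2n}S^2\}$. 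Its boundary splits into the ray $Q=0$ (whose $\Phi$-preimage is the union of the coordinate axes) and the arc $Q=\tfrac{n-1}{2n}S^2$ (whose preimage is the union of the diagonals $|x_1|=\cdots=|x_n|$). Moreover $\Phi$ is proper, so a subset of $Z(f)$ is compact if and only if its $\Phi$-image is bounded.

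Assuming $Z(f)$ has the maximal total $2^n+1$ components, the Theorem forces $\Gamma\cap\mathcal{R}$ to be the union of exactly two arcs: a bounded arc $\alpha_1$ with one endpoint on the axis $Q=0$ and one on the diagonal $Q=\tfrac{n-1}{2n}S^2$, together with an unbounded arc $\alpha_2$ issuing from the axis and escaping to infinity in the interior of $\mathcal{R}$. Indeed, the parabola $\Gamma$ meets $\partial\mathcal{R}$ in at most four points and escapes to infinity on at most one branch; any other admissible configuration either has too few arcs to reach $2^n+1$ total components, or has a bounded arc joining the diagonal to itself, which would lift to $2^n$ compact components and, combined with the further non-compact components coming from an additional unbounded arc, would exceed the total bound of the Theorem. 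By properness of $\Phi$, the lift of $\alpha_1$ is compact and that of $\alpha_2$ is non-compact. It then remains to show that $\alpha_1$ lifts to a single component (necessarily $CB_n$-invariant, hence centered and symmetric about the origin, the unique $CB_n$-fixed point), and that $\alpha_2$ lifts to exactly $2^n$ components, one in the interior of each open orthant of $\mathbb{R}^n$.

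The main obstacle lies in these last counting steps, which I would carry out in the fundamental chamber $C=\{x_1\ge\cdots\ge x_n\ge 0\}$. One analyzes which walls of $C$ are met by $\Phi|_C^{-1}(\alpha_1)$ and $\Phi|_C^{-1}(\alpha_2)$: the first piece meets both the sign wall $x_n=0$ and, through its diagonal endpoint, the deepest permutation face $x_1=\cdots=x_n$, so its $CB_n$-orbit glues into a single smooth closed hypersurface surrounding $0$; the second meets only the sign wall $x_n=0$, so reflecting across the $n$ sign mirrors distributes it over the $2^n$ orthants without further gluing from permutations. The $k$-sector formalism together with the Cerf--Douady theory of varieties with angular boundary, both recalled in the introduction, provide the rigorous bookkeeping for this chamber analysis and yield the claimed split $2^n+1=1+2^n$.
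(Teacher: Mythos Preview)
Your approach is genuinely different from the paper's and considerably more ambitious. The paper does not attempt an independent argument at all: its proof of the corollary is literally the sentence ``follows from the proof of Theorem~1.1''. That proof builds one explicit family of coefficients for which the substitution $X_i=x_i^{2}$ on the positive orthant turns $Z(f)$ into a two-sheeted revolution hyperboloid in $\mathbb{R}^n$; one then reads off that the lower sheet meets every wall of the fundamental chamber (hence glues to a single compact component about the origin) while the upper sheet meets only the permutation walls $\{x_i=x_j\}$ and avoids the sign walls $\{x_i=0\}$ (hence yields $2^{n}$ unbounded pieces, one per open orthant). The corollary merely records this.

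Your reduction via $\Phi=(\sigma_1,\sigma_2)$ to the planar region $\mathcal{R}$ is elegant, and the description of $\mathcal{R}$ and of the properness of $\Phi$ is correct. However, the argument has a real gap for $n\ge 3$. The pair $(\sigma_1,\sigma_2)$ does not generate $\mathbb{R}[V]^{CB_n}$, so the fibres of $\Phi$ are positive-dimensional and, crucially, a generic interior point of $\mathcal{R}$ already has preimages lying on the coordinate hyperplanes $\{x_i=0\}$ (indeed the whole sub-region $0\le Q\le \tfrac{n-2}{2(n-1)}S^{2}$ is hit by points with some $x_i=0$). Consequently, knowing only where the arcs $\alpha_1,\alpha_2$ touch $\partial\mathcal{R}$ does \emph{not} tell you which walls of the chamber their $\Phi$-preimages meet, and your wall-by-wall bookkeeping in the last paragraph cannot be carried out from the planar data alone. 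In fact your description there is inverted: to obtain $2^{n}$ components, the unbounded piece must meet the permutation walls $\{x_i=x_j\}$ and \emph{avoid} every sign wall $\{x_i=0\}$ (this is exactly what the paper checks for the upper sheet), whereas you assert it meets only the sign wall $x_n=0$, which would glue adjacent orthants rather than separate them. Similarly, the claimed endpoints of $\alpha_1,\alpha_2$ on the ray $Q=0$ are not what the hyperboloid picture gives: both sheets meet the main diagonal, so both images touch the parabola $Q=\tfrac{n-1}{2n}S^2$, not the axis $Q=0$.

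If you want to salvage the planar idea, you would need a finer stratification of $\mathcal{R}$ recording how many coordinates vanish in the fibre (equivalently, the images of all the faces $\{x_{i_1}=\cdots=x_{i_k}=0\}$), and then track how $\Gamma$ crosses these internal strata, not just $\partial\mathcal{R}$. That essentially reconstructs the $n$-dimensional quadric analysis the paper performs directly via $X_i=x_i^{2}$, which is why the paper's route is shorter here.
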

By diameter of a subset we mean: 
 the least upper bound of the set of all distances between pairs of points in the subset.

\begin{corollary}
Let $b_{i}=dim H_{i}(X,\mathbb{A})$ be the $i$-th Betti number and $\mathbb{A}$ is a unital commutative ring.
 \begin{enumerate}
\item  If the quartic has $2^{n}+1$ connected components then $b_{0}=2^{n}+1, b_{n-1}=1, b_{i}=0$ for $i\neq \{0, n-1 \}$.
\item If the quartic has $2^{n}$ compact connected  components then $b_{0}=2^{n}, b_{n-1}=2^n, b_{i}=0$ for $i\neq \{0,n-1\} $
\item If the quartic has $2$  compact connected components then $b_{0}=2,b_{n-1}=2, b_{i}=0$ for $i\neq \{0,n-1\} $
\end{enumerate}
\end{corollary}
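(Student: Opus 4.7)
The plan is to reduce the computation to known Betti numbers of the individual components by invoking the homological additivity
\[
H_i(X,\mathbb{A}) \;\cong\; \bigoplus_{j} H_i(X_j,\mathbb{A})
\]
that holds since the connected components $X_j$ of an algebraic variety in $\mathbb{R}^n$ are open-closed in $X$. So the entire computation hinges on identifying, up to homotopy, each connected component produced by the preceding theorem.

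First I would handle the three cases by classifying their components. In case (2) the theorem furnishes $2^n$ disjoint \emph{compact} components; by the $CB_n$-symmetry they are permuted transitively by the action of the hyper-octahedral group (one copy in each open orthant after a generic translation of coordinates), so they all have the same topology, and by the explicit chamber-by-chamber description I would argue each is a topological $(n-1)$-sphere: inside one fundamental chamber the piece of $Z(f)$ is a codimension-one topological disk with angular boundary on the mirrors (Cerf--Douady), and reflecting across the $n$ bounding mirrors of the orthant assembles a single closed manifold bounding a convex-like cell, which is an $(n-1)$-sphere. In case (3), the same chamber-analysis gives the two nested compact components, each again an $(n-1)$-sphere by the preceding corollary's explicit description. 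In case (1), the unique compact component (centered at the origin, by the corollary) is obtained in the same way and is an $(n-1)$-sphere, while each of the $2^n$ non-compact components lies in a single orthant, has infinite diameter, and is homeomorphic to the graph of a smooth function on an orthant of $\mathbb{R}^{n-1}$; in particular it is contractible.

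Once these homotopy types are established, the rest is bookkeeping: an $(n-1)$-sphere contributes $1$ to $b_0$ and to $b_{n-1}$ and $0$ elsewhere, while a contractible component contributes $1$ to $b_0$ and $0$ to $b_i$ for $i>0$. Summing according to the three cases reproduces the Betti numbers in the statement. The step I expect to be the main obstacle is the rigorous identification of each compact component as a topological $(n-1)$-sphere, because it requires more than connectedness and compactness of the local chamber-piece: one must check that the chamber-piece is a disk meeting the mirrors transversely along its angular boundary faces, so that the Cerf--Douady doubling/reflection procedure produces a closed manifold with the cellular structure of $\partial([0,1]^n)$. Equivalently, one must rule out pinching or handle attachment when adjacent chamber-pieces are glued across a common mirror. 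I would establish this by examining the Hessian of $f$ restricted to each mirror (the restriction is a quadratic form in the remaining coordinates whose signature is controlled by the parameters $A,B,C,D$ in \eqref{E:1}) and by showing that in the parameter regimes producing the extremal counts in the theorem, the chamber-piece is a smooth graph over a face of the chamber, hence a disk. All remaining claims then follow from the sphere/disk Betti computation and additivity.
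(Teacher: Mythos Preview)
Your proposal is correct and follows essentially the same route as the paper: use the geometric description of the components obtained in the proof of the main theorem (via the substitution $\phi:x\mapsto x^{1/2}$ turning the quartic into a quadric in the positive orthant) to identify each component up to homotopy type, and then read off the Betti numbers by additivity. The paper's own proof is literally the single line ``follows from the proofs of theorem~1,'' so you have supplied the argument the paper leaves implicit; in particular, your identification of the compact components as $(n-1)$-spheres and of the $2^n$ unbounded components as contractible is exactly what the quadric description (two-sheeted hyperboloid versus ellipsoid of revolution about $\{x_1=\dots=x_n\}$) in the theorem's proof provides, and the Cerf--Douady transversality concern you flag is handled there by the explicit eigenvalue computation for $\Lambda_0$.
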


\vspace{3pt}

We remark the following.

\vspace{3pt}

Let $n=3$, and $X,Y$ be two 3-manifolds homeomorphic to a 3-ball in $\mathbb{R}^3$.
Consider two cases: 
\begin{enumerate}
\item $\mathcal{A}_{1}:=X \cup Y$ such that $X\cap Y=\emptyset$ and $X\not\subset Y$, $X \not\subset Y$.
\item  $\mathcal{A}_{2}:=X \cup Y$ such that $X \cap Y \neq\emptyset$ and  $X\subset Y$ , both concentric.
The algebraic surface $\partial{X}\amalg\partial{Y}$ in this case refers to a quartic defined by the coefficients: $A=1, B=-b, C=b, b\in\mathbb{R}^{+}_{*}$, $0<\beta=\frac{1}{|B|}<3$ and $0<k<1$.
   \end{enumerate}
 In both cases by~\cite{{Co1},{Co2},{Co3}}, $\partial{X}\amalg\partial{Y}$ have the same Betti number ; $b_{0}=2,b_{n-1}=2, b_{i}=0$ for $i\neq \{0,2\} $ .

  
\begin{remark}

 The same statement holds for arbitrary dimension $n$.
\end{remark}
 The proofs of both statement are straightforward.
\begin{proposition}
If $n=3$ then the $2^3+1$ is the maximal bound on the number of connected components of a $BC_{3}$-polynomial.
\end{proposition}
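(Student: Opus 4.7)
My plan is to deduce the proposition as a specialization of Theorem 1 to $n = 3$, after first reducing the quartic-counting problem in $\mathbb{R}^3$ to a quadric-counting problem in $\mathbb{R}^3_y$.

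Since every $CB_3$-invariant polynomial depends on $x_1, x_2, x_3$ only through the squared variables $y_i = x_i^2$, the quartic $f = A\sigma_2 + B\sigma_1^2 + C\sigma_1 + D$ pulls back from a quadratic $g \in \mathbb{R}[y_1, y_2, y_3]$ along the $2^3$-to-one branched covering $\phi(x) = (x_1^2, x_2^2, x_3^2)$. Hence $Z(f) = \phi^{-1}(Z(g) \cap \mathbb{R}^3_{\geq 0})$. For each connected component $K$ of $Z(g) \cap \mathbb{R}^3_{\geq 0}$, let $J(K) \subseteq \{1,2,3\}$ record the indices of the coordinate hyperplanes $\{y_i = 0\}$ met by $K$; then $\phi^{-1}(K)$ consists of exactly $2^{3 - |J(K)|}$ connected components, the sign flips $x_i \mapsto -x_i$ with $i \in J(K)$ being absorbed at the ramification locus.

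The total number of components of $Z(f)$ is then $\sum_K 2^{3-|J(K)|}$. Since $g$ is a real quadric, $Z(g)$ has at most two topological components in $\mathbb{R}^3$, and the $S_3$-symmetry of $g$ (inherited from the $CB_3$-invariance of $f$) constrains the possible configurations. The arrangement that maximizes this sum consists of one component $K_1$ lying in the interior of the positive octant (contributing $2^3 = 8$) together with one unbounded component $K_2$ meeting all three coordinate planes (contributing $2^0 = 1$), for a total of $2^3 + 1 = 9$. Achievability is guaranteed by Theorem 1 applied with $n = 3$, which produces explicit coefficients $(A,B,C,D) \in \mathbb{R}^4$ realizing the above configuration.

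The main obstacle is the case analysis: one must rule out configurations in which several interior components of $Z(g) \cap \mathbb{R}^3_{\geq 0}$ occur, each contributing the full $2^3 = 8$ and thereby giving a total exceeding $9$. This is excluded by the degree-two restriction on $g$ combined with its $S_3$-symmetry — a hyperboloid of two sheets symmetric about the $(1,1,1)$-axis can place at most one sheet as a bounded interior component, and an ellipsoid is itself a single connected component — together with the component-counting formula above. This is precisely the analysis underlying Theorem 1, so the proposition follows by specialization to $n = 3$.
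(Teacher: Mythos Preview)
The paper's own proof is a bare citation to \cite{Co1,Co2,Co3}; no argument is given within the paper. Your write-up actually supplies more content than the paper does at this point, and your reduction --- pulling the quartic back to a quadric $g$ in the variables $y_i=x_i^{2}$ and counting components of $Z(f)$ as $\sum_{K} 2^{\,3-|J(K)|}$ over the connected components $K$ of $Z(g)\cap\mathbb{R}^{3}_{\ge 0}$ --- is correct and is exactly the mechanism the paper uses (via chambers and Lemma~8) to prove the \emph{achievability} direction in Theorem~1.

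Where your argument breaks down is the upper bound. You finish by saying that ``this is precisely the analysis underlying Theorem~1, so the proposition follows by specialization to $n=3$,'' but Theorem~1 as proved in the paper only \emph{constructs} a configuration with $2^{n}+1$ components; its proof never shows that no $CB_{n}$-quartic can exceed this number. The proposition you are asked to prove is the genuine maximality statement for $n=3$, and the paper explicitly outsources that to the external references --- it is not contained in Theorem~1. Your own case sketch (a two-sheeted hyperboloid places at most one sheet in the open octant; an ellipsoid is a single piece) is a reasonable start but is not a proof: it does not treat one-sheeted hyperboloids or the degenerate quadrics (cones, cylinders, pairs of planes), it does not control how many pieces a single component of $Z(g)$ may break into when intersected with the closed octant, and it does not bound the contribution of those boundary-touching pieces. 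Until that case analysis is carried out --- which is precisely what \cite{Co1,Co2,Co3} are invoked for --- the maximality claim remains unproved.
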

\begin{proof}
The proof follows from~\cite{{Co1},{Co2},{Co3}}.
\end{proof}
In the following proposition, we define a polynomial invariant under $CB_{n}$ of degree $d=2^{m+1}$ which has non-zero coefficients for the following  homogeneous polynomials invariant under $CB_{n}$: 
\[ u= \sum_{i<j=1}^{n} x_{i}^{2^m}x_{j}^{2^m} , v=(\sum_{i=1}^{n} x_{i}^{2^m})^2, w=\sum_{i=1}^{n} x_{i}^{2^m}. \] For the other homogeneous polynomials invariant under $CB_{n}$, the real coefficients are equal to zero. 
\begin{corollary}
If the invariant polynomial is of degree $d=2^{m+1}, m\in \mathbb{N}^{*}$ and of equation
 \begin{equation}\label{E:3}
 f(x_{1},\dots x_{n}) = A \sum_{i<j=1}^{n} x_{i}^{2^m}x_{j}^{2^m} + B\left(\sum_{i=1}^{n} x_{i}^{2^m}\right)^2+ C\sum_{i=1}^{n} x_{i}^{2^m} +D=0,  \end{equation} then the results in ~\eqref{E:2} hold for the same set of parameters $(A,B,C,D)\in \mathbb{R}^4$. 
 \end{corollary}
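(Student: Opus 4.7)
The plan is to reduce the general case $m\geq 1$ to the quartic case $m=1$, which is Theorem~1, via a change of coordinates that restricts to a homeomorphism on a fundamental chamber of $CB_{n}$. The key observation is that with the substitution $u_{i}=x_{i}^{2^{m}}$, equation~\eqref{E:3} becomes
\[
Q(u_{1},\dots,u_{n})=A\sum_{i<j}u_{i}u_{j}+B\Bigl(\sum_{i=1}^{n}u_{i}\Bigr)^{2}+C\sum_{i=1}^{n}u_{i}+D=0
\]
on the nonnegative orthant $\mathbb{R}_{\geq 0}^{n}$, and this equation is independent of $m$: it depends only on $(A,B,C,D)$. Writing $\pi_{m}(x):=(x_{1}^{2^{m}},\dots,x_{n}^{2^{m}})$, we therefore have $Z(f)=\pi_{m}^{-1}(Z(Q)\cap\mathbb{R}_{\geq 0}^{n})$, so the entire problem comes down to understanding how $\pi_{m}$ unfolds the quadric $Z(Q)$ inside the orthant.

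First I would verify that $\pi_{m}$ restricted to $\mathbb{R}_{\geq 0}^{n}$ is a homeomorphism, with inverse given by coordinate-wise $2^{m}$-th roots; this restriction fixes each coordinate subspace $\{u_{i_{1}}=\cdots=u_{i_{k}}=0\}$ setwise and preserves each hyperplane $u_{i}=u_{j}$, hence it preserves the stratification of $\mathbb{R}_{\geq 0}^{n}$ by closures of $CB_{n}$-chambers. Consequently, for a fundamental chamber $\mathcal{C}\subset\mathbb{R}_{\geq 0}^{n}$, the piece $Z(f)\cap\overline{\mathcal{C}}$ is homeomorphic to $Z(Q)\cap\overline{\mathcal{C}}$ as a stratified subspace of an $n$-sector of index $n-1$ in the sense of Cerf--Douady~\cite{Do,Ce61}, and the homeomorphism identifies the incidence with every face of $\overline{\mathcal{C}}$.

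Next, because $f\in\mathbb{R}[V]^{CB_{n}}$, the full variety $Z(f)$ is assembled from $Z(f)\cap\overline{\mathcal{C}}$ by the signed permutation action of $CB_{n}$ on $\mathbb{R}^{n}$, and the three counts appearing in Theorem~1 --- total number of connected components, number of compact connected components, and maximal nesting depth --- are determined only by the pair consisting of the chamber piece together with its incidence with the walls. By the previous step this pair coincides, for every $m\geq 1$ and fixed $(A,B,C,D)$, with the corresponding pair for $m=1$. Therefore the three counts coincide, and Theorem~1 transfers the bounds $2^{n}+1$, $2^{n}$, $2$ together with the corresponding parameter subsets of $\mathbb{R}^{4}$ from the quartic case to all $m\geq 1$.

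The main obstacle is the second step, namely turning the informal statement ``the three counts only see the pair (chamber piece, wall incidence)'' into a rigorous gluing lemma in the Cerf--Douady framework already used in the proof of Theorem~1. In practice this amounts to extracting from the proof of Theorem~1 the combinatorial recipe that reconstructs $Z(f)$ from $Z(f)\cap\overline{\mathcal{C}}$ by successive reflections across the walls, and then observing that the recipe takes as input only the homeomorphism type of the chamber piece together with its face stratification, both of which are transported by $\pi_{m}|_{\mathbb{R}_{\geq 0}^{n}}$. Once this is in place the corollary is immediate.
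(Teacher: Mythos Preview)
Your proposal is correct and follows essentially the same route as the paper: reduce the degree $2^{m+1}$ equation to the quadric $Q$ on the positive orthant via the coordinatewise power map, observe that this map is a homeomorphism of the orthant preserving the wall stratification, and then invoke the chamber-by-chamber reconstruction already carried out in the proof of Theorem~1. The paper phrases the substitution as $\mathbf{x}\mapsto\mathbf{x}^{1/2^{m}}$ on a chamber in $\mathbb{R}^{n}_{>0}$ and is content to note that this ``does not modify the topology of the hypersurface defined in the chamber''; your version spells out more carefully that $\pi_{m}$ preserves both the coordinate hyperplanes and the diagonals $u_{i}=u_{j}$, which is exactly what is needed to feed the reflection lemma (Lemma~9 in the paper) the same input as in the quartic case.
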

\section{Proofs}
We will list a few technical lemmas useful for the proofs.
 
Recall that the finite subgroup of $GL(V)$ associated to $CB_{n}$ admit:
\begin{enumerate}
\item  $n^2$ mirrors; 
\item $2^n n!$ chambers.
\end{enumerate}
This gives the number of fundamental domains and the number of reflections. Therefore, we can construct
 the algebraic variety and count the number of connected components. 

\begin{lemma}
 
Let $Q$ denote a quadric in the affine space  $\mathbb{R}^n$. 
Let the equation of $Q$ be given by the formula : $\bf{x}^{T}\Lambda\bf{x}=0$ with 
 
\[ \Lambda=\left(\renewcommand{\arraystretch}{1.5}\begin{tabular}{c|c}
$D$ &$ \frac{1}{2}\mathbf{C}^{T}$\\ \hline
$\frac{1}{2}\mathbf{C}$ & $\Lambda_{0}$  
\end{tabular}\right),
 \quad \mathbf{C}=(C,C, \dots, C)^{T},\ \Lambda_{0}= \begin{pmatrix}
B & W  & \dots &W\\ 
W & B & \dots & W\\
\dots & \dots & \dots & \dots\\\
W & W & \dots & B  \\  
\end{pmatrix}\quad W=\frac{A+2B}{2}.
 \] then the non-degenerated quadric has revolution axis $\{x_{1}= \dots = x_{n}\}$.
\end{lemma}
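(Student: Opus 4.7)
The strategy is to diagonalize the block $\Lambda_0$, which has the special form $(B-W)I+WJ$ where $J$ is the all-ones matrix, and then verify that in the resulting orthonormal coordinates the quadric equation exhibits an obvious rotational symmetry around the line spanned by $(1,\dots,1)$.

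First I would compute the eigenstructure of $\Lambda_0$. The vector $\mathbf{v}=(1,\dots,1)^T$ is an eigenvector with eigenvalue $\mu_1 = B+(n-1)W$, while every vector $\mathbf{u}$ in the hyperplane $\mathbf{v}^{\perp}=\{\sum u_i = 0\}$ is an eigenvector with eigenvalue $\mu_2 = B-W = -A/2$. Non-degeneracy of $\Lambda$ forces in particular $\mu_2\neq 0$, hence $A\neq 0$; the eigenspace $\mathbf{v}^{\perp}$ has multiplicity $n-1$, which is precisely the algebraic condition for a quadric of revolution.

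Next I would perform an orthogonal change of variable $\mathbf{y}=O\mathbf{x}$ with $O$ chosen so that $y_1 = (x_1+\dots+x_n)/\sqrt{n}$ and $(y_2,\dots,y_n)$ are orthonormal coordinates on $\mathbf{v}^{\perp}$. Under this transformation the quadratic part $\mathbf{x}^T\Lambda_0\mathbf{x}$ becomes $\mu_1 y_1^2 + \mu_2(y_2^2+\dots+y_n^2)$, the linear term $C\sum x_i$ collapses to $C\sqrt{n}\,y_1$ because $\mathbf{C}=C\mathbf{v}$ is parallel to $\mathbf{v}$, and the constant $D$ is unaffected. The equation of $Q$ therefore reads
\[ \mu_1 y_1^2 + \mu_2(y_2^2+\dots+y_n^2) + C\sqrt{n}\,y_1 + D = 0, \]
which is manifestly invariant under the $O(n-1)$-action rotating $(y_2,\dots,y_n)$.

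Finally, completing the square in $y_1$ shifts the origin along the $y_1$-axis without disturbing the rotational symmetry, exhibiting $Q$ as a genuine non-degenerate surface of revolution whose axis is the $y_1$-axis. Pulling back through $O^{-1}$, this axis is the line $\mathbb{R}\cdot\mathbf{v}=\{x_1=\dots=x_n\}$, as claimed. There is no real obstacle: the proof reduces to the spectrum of an $aI+bJ$ matrix, combined with the observation that the only off-diagonal block of $\Lambda$, namely $\tfrac{1}{2}\mathbf{C}$, lies along $\mathbf{v}$ and so the accompanying linear term is itself $O(n-1)$-invariant and cannot break the symmetry.
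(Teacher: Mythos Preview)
Your argument is correct and follows essentially the same route as the paper: both diagonalize $\Lambda_0$ to find the $(n-1)$-fold eigenvalue $B-W=-A/2$ on $\mathbf{v}^\perp$ and the simple eigenvalue $B+(n-1)W$ along $\mathbf{v}=(1,\dots,1)$, then conclude the quadric is a surface of revolution about the line $\{x_1=\dots=x_n\}$. Your explicit observation that the linear part $\tfrac12\mathbf{C}$ is parallel to $\mathbf{v}$, and hence cannot break the $O(n-1)$ symmetry, makes rigorous a point the paper handles only implicitly by writing down the center.
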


\begin{lemma}
Consider a chamber in $\mathbb{R}^n_{>0}$.
Suppose that $Q$ is a $(n-1)$-hyperboloid with strictly positive center. Then the  $(n-1)$-hyperboloid intersects the chamber in two disjoint connected components. 
\end{lemma}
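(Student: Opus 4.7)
The plan is to exploit the revolution symmetry established in the preceding lemma: I would parameterise each sheet of the two-sheeted hyperboloid by coordinates adapted to the revolution axis, and reduce the connectedness question to that of a trivial fibration over a spherical simplex.

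First, I fix coordinates $(u,v)$ adapted to $\ell=\{x_1=\dots=x_n\}$, writing $x=u\,\hat{e}+v$ with $\hat{e}=(1,\dots,1)/\sqrt{n}$ and $v\perp\hat{e}$. In this frame the hyperboloid splits as $H^{\pm}=\{(u,v):\ u-u_0=\pm a\sqrt{1+|v|^2/b^2}\}$, with vertices $(u_0\pm a,0)$ on $\ell$; since the centre is strictly positive and $\ell$ is an extreme ray of the simplicial cone $\mathcal{C}$, both vertices lie in $\overline{\mathcal{C}}$. Writing $v=r\omega$ with $r=|v|\geq 0$ and $\omega\in S^{n-2}$, the sheet $H^{\pm}$ is faithfully parameterised by $(r,\omega)$, with $u=u(r)=u_0\pm a\sqrt{1+r^2/b^2}$.

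Next, I translate the chamber conditions into $(r,\omega)$. The simplicial cone $\mathcal{C}$ has $n-1$ reflection walls $\{x_i=x_j\}$ (all containing $\ell$) and one mirror wall $\{x_k=0\}$ lying on $\partial\mathbb{R}^n_{>0}$. For $r>0$ the reflection-wall conditions become $\omega\in\Sigma$ where $\Sigma\subset S^{n-2}$ is an open spherical simplex, independent of $r$; the mirror condition $x_k>0$ becomes $u(r)/\sqrt{n}+r\omega_k>0$, i.e.\ $r<r_c(\omega)$ for a continuous function $r_c\colon\Sigma\to(0,+\infty]$. Hence $H^{\pm}\cap\mathcal{C}$ is identified with $\{(r,\omega):\ \omega\in\Sigma,\ 0<r<r_c(\omega)\}$, a bundle over the connected base $\Sigma$ with connected open-interval fibres, hence itself connected.

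Finally, $H^{+}$ and $H^{-}$ lie in the opposite open half-spaces $\{u>u_0\}$ and $\{u<u_0\}$ cut by the central hyperplane perpendicular to $\ell$, so $H^{+}\cap\mathcal{C}$ and $H^{-}\cap\mathcal{C}$ are disjoint, giving the two announced connected components. The principal obstacle is the second step: verifying that $r_c(\omega)>0$ for every $\omega\in\Sigma$, which is exactly where the strict positivity of the centre enters, together with the fact that $\ell$ is an extreme ray of $\mathcal{C}$, to ensure that a neighbourhood of each vertex on the corresponding sheet is contained in $\mathcal{C}$.
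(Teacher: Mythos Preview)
Your approach is the same as the paper's in spirit: both proofs rest entirely on the preceding lemma, which places the revolution axis along $\ell=\{x_1=\dots=x_n\}$ and the centre on $\ell$. The difference is one of detail. The paper's proof of this lemma is literally a two-line assertion (``By lemma~6 the quadric has revolution axis and centre on $\{x_1=\dots=x_n\}$. So, if the coordinates of the centre are strictly positive then the $n$-hyperboloid intersects the chamber in two disjoint connected components.''), with no parameterisation, no fibration argument, and no analysis of the walls. Your $(u,r,\omega)$ decomposition and the reduction to an interval bundle over a spherical simplex are entirely your own, and they supply the geometry the paper leaves implicit.

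Two places in your argument need care, and neither is addressed in the paper either. First, ``strictly positive centre'' gives only $u_0>0$; your claim that \emph{both} vertices $(u_0\pm a)\hat e$ lie in $\overline{\mathcal C}$ needs $u_0\ge a$, since otherwise the lower vertex sits on the negative part of $\ell$ and the lower sheet may miss the chamber entirely. Second, the reduction of the wall condition $x_k>0$ to a single inequality $r<r_c(\omega)$ is not automatic: on the upper sheet the function $r\mapsto u(r)/\sqrt{n}+r\omega_k$ need not be monotone when $\omega_k<0$, so the positivity set could a priori fail to be an interval. In the paper these issues are sidestepped because, in the application (proof of Theorem~1.1), the coefficients are further constrained (``acute'' hyperboloid, upper sheet avoiding the face $\{x_i=0\}$); you should either import those hypotheses explicitly or note that the lemma as stated requires them.
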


\begin{lemma}
Let $V$ be the finite dimensional real vector space and $s$ an automorphism of $V$ of order two. 
Let $H$ be the set corresponding to $ker(s-1)$.

Let  $C$ be a connected subset of $V$.
\begin{itemize}
\item If $C \cap  ker(s-1)\neq \emptyset $, then after applying the automorphism $s$ there exists only one connected component $C'$ in $V$.
\item If $C \cap ker(s-1)= \emptyset$, then after applying the automorphism $s$ there exist two connected components in $V$.\end{itemize}
\end{lemma}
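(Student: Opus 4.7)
The plan is to interpret the statement as concerning the connected components of $C \cup s(C)$, i.e.\ the $s$-orbit of the connected set $C$. Since $s$ is a linear automorphism of $V$, it is continuous, and therefore $s(C)$ is itself connected. The central observation is that every point of $H = \ker(s-1)$ is fixed by $s$, so any element of $C \cap H$ automatically lies in $s(C)$ as well; in particular $C \cap H \subseteq C \cap s(C)$.

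For the first bullet, I would argue as follows. Assume $C \cap H \neq \emptyset$ and pick any $x \in C \cap H$. Then $s(x) = x$, so $x \in C \cap s(C)$. Thus $C$ and $s(C)$ are two connected subsets of $V$ sharing at least one point, and a standard topological fact says that the union of two connected sets with nonempty intersection is connected. Hence $C' := C \cup s(C)$ is a single connected component, which proves the first claim.

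For the second bullet, I would use that in the context of this paper $s$ is one of the reflections generating $CB_n$, so $H = \ker(s-1)$ is a genuine hyperplane and $s$ is the associated orthogonal reflection. Consequently $V \setminus H$ splits into two disjoint open half-spaces $V^{+}$ and $V^{-}$, and $s$ exchanges them: $s(V^{+}) = V^{-}$. Since $C$ is connected and disjoint from $H$, the sets $C \cap V^{+}$ and $C \cap V^{-}$ form an open partition of $C$, so $C$ must sit entirely in one of the half-spaces, say $V^{+}$. Its image $s(C)$ then lies entirely in $V^{-}$, so $C$ and $s(C)$ are disjoint. The half-spaces $V^{+}$ and $V^{-}$ provide an open separation of $C \cup s(C)$, and since each of $C$ and $s(C)$ is connected, they are precisely the two connected components, which proves the second claim.

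The main obstacle is essentially interpretive rather than technical: the lemma as stated only assumes that $s$ is an automorphism of order two, which by itself does not force $\ker(s-1)$ to disconnect $V$ (think of $s = -\mathrm{id}$ in even dimension, where $\ker(s-1) = \{0\}$ does not separate). To make the ``two components'' conclusion valid one must invoke the implicit setting of the paper, namely that $s$ is a reflection from the Coxeter group $CB_n$, so that $H$ is a hyperplane and $s$ the induced reflection. Once this is made explicit, the rest is a routine application of the definition of connectedness and of the fact that a connected set cannot straddle a separating hyperplane without meeting it.
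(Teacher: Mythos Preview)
Your proof is correct and is precisely the ``straightforward'' argument the paper alludes to: the paper's own proof of this lemma consists of the single sentence ``The proof is straightforward.'' Your observation that the second bullet genuinely requires $s$ to be a reflection (so that $H=\ker(s-1)$ is a hyperplane separating $V$) is a valid caveat and in fact sharper than anything the paper makes explicit.
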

\begin{proof} 

\hspace{3pt}



\hspace{3pt}

We will prove theorem 1.1.
Let us consider $V=\mathbb{R}^n $.
The representation of $G=B_{n}$ generates $n^2$ reflection planes. 
The set of  $ker(s-1)$, where $s$ is a reflection of $G$ are defined by the set: 
$\mathcal{A}=\{ x_{1}=0,\dots , x_{n}=0,  x_{i}=\pm x_{j}\}, \space  i,j\in \{1,2,\dots n\}$.

Consider the substitution morphism defined in a chamber contained in $\mathbb{R}^n_{>0}$:  
\[\phi: \mathbb{R}^n_{>0}\to \mathbb{R}^n_{>0}
\]
\[  \bf{x} \mapsto \bf{ x^{ \frac{1}{2} } } \]

the polynomial $F=f\circ\phi$ is a quadric defined in the chamber. The morphism $\phi$ defined in $\mathbb{R}^n_{>0}$ is an isomorphism. Therefore, applying this isomorphism to the chamber, the topology of the surface in the chamber is not modified. 
The quadric polynomial is: 
   \[F(X_{1},\dots, X_{n}) = A \sum_{i<j=1}^{n} X_{i}X_{j}+ B\left(\sum_{i=1}^{n} X_{i}\right)^2+ C\sum_{i=1}^{n} X_{i} +D=0\]

Referring to ~\cite{Bu}, the classification of quadrics holds for the $n$-dimensional case. 

\vspace{3pt}

Assume that the quadric is non-degenerated and that it is a two -sheeted $n$-hyperboloid. In the notations of ~\cite{Bu} it corresponds to a canonical form defined as $(1,-1 \dots, -1)$ .

It is an easy calculation to show that the determinant : $\det(\Lambda_{0})=(B-W)^{n-1}(B+(n-1)W)=(\frac{-\beta}{2})^{n-1}(\frac{\beta(n-1)-2n}{2})$ is positive if $(n-1)$ is odd and negative if $(n-1)$ is even. 
Since the eigenvalues of  $\Lambda_{0}$ are all negative, we have the necessary condition: $0<\beta<\frac{2n}{n-1}$.

\vspace{3pt}

 Suppose that the center of the quadric is strictly positive , i.e $sgn(nC)=sgn(4((n-1)W -1))$. 
 Then, applying lemma 1 and 2 there exist two connected components in the chamber. 
 
 Choose coefficients $(A,B,C,D)$ such that the angle of the $n-1$-hyperboloid is acute, i.e there exists no intersection between the upper sheet of the $n-1$-hyperboloid and the $(n-1)$-face of the chamber, with support in the $n-1$-plane, defined by the equation $\{x_{i}=0\}$, for a given $i\in \{1,\dots,n\}$.
Therefore, in the chamber there exist two connected components.
 One is compact and the other of infinite diameter. 
 
 \vspace{3pt}
 
 Using the inverse morphism defined on the chamber in $\mathbb{R}^n_{>0}$: 
\[\phi': \mathbb{R}^n_{>0}\to \mathbb{R}^n_{>0}\]\label{E:5}
\[  \bf{x} \mapsto \bf{ x^2} \]

the $(n-1)$-surface is a quartic defined in the chamber in $\mathbb{R}^n_{>0}$ with two connected components. 
 \vspace{3pt}
Consider the lower connected component in the chamber. The lower connected component intersects all the $(n-1)$-faces of the chamber. The upper connected component intersects all the $(n-1)$-faces with support in the $(n-1)$-planes with equation $\{x_{i}=x_{j}\}$ and non intersection with the face of the chamber with support in $\{x_{i}=0\}$.  
Since the closure of a chamber is a fundamental domain,
 applying lemma 9 the lower component forms a compact connected component in $\mathbb{R}^n$. 
 The upper connected component forms a connected component in the gallery contained in $\mathbb{R}^n_{>0}$. 
 By part 2 of lemma 9 there exists a connected component in each region of the plane arrangement $\mathbb{R}^n-\cup_{H\in \mathcal{A'}}H$, $\mathcal{A'}:=\{x_{i}=0\}_{i=1,\dots, n}$ which is the image of the upper connected component of the quadric, after reflection. 
Therefore, the quartic has one compact connected component centered at the origin and connected components with infinite diameter in each region of $\mathbb{R}^n-\cup_{H\in \mathcal{A'}}H$. Thus, we have shown that we have $2^n+1$ connected components for this given set of parameters $(A,B,C,D)\in\mathbb{R}^4$.
\end{proof}

\vspace{3pt}

\begin{example}
For $n=3$, there exist $9$ mirrors, defined by the faces $A=\{ x_{1}=0, x_{2}=0, x_{3}=0,  x_{1}=\pm x_{2},  x_{1}=\pm x_{3},   x_{2}=\pm x_{3}\}$.
Obviously the three planes  $\{x_{1}=0, x_{2}=0, x_{3}=0\}$ partition the euclidean space into $8$ connected components and each of these 8 connected components 
are divided into 6 conical sub-regions.
Using the substitution map, one defines the following quadric:  \[F(X_{1},X_{2}, X_{3}) = A \sum_{i<j=1}^{3} X_{i}X_{j}+ B\left(\sum_{i=1}^{3} X_{i}\right)^2+ C\sum_{i=1}^{3} X_{i} +D=0\]
Setting the quadric in matrix notation, the determinants and eigenvalues of the matrices are:
 \begin{equation}
\det \Lambda = (B-W)^{2}\left((B+2W)D-\frac{3}{4}C^{2} \right)   ,\quad \det \Lambda_{0}=(B-W)^{2}(B+2W).
 \end{equation}
 The eigenvalues of $\Lambda_{0}$ are:
  \begin{equation}
  \lambda_{1}=\lambda_{2}=B-W=-\frac{A}{2}, \ \lambda_{3}=B+2W= A+3B.
  \end{equation}
From the normalized  eigenvectors $\{\mathbf{v}_{1},\mathbf{v}_{2},\mathbf{v}_{3}\}$, one can obtain the columns of the transition matrix $P$ : 
\[ P=\left( \begin{array}{ccc}
-\frac{1}{\sqrt2} & -\frac{1}{\sqrt2}  & \frac{1}{\sqrt3}\\ 
0 & \frac{1}{\sqrt2} & \frac{1}{\sqrt3}\\
\frac{1}{\sqrt2} & 0 & \frac{1}{\sqrt3}  \\  
\end{array}\right).\]
An hyperboloid corresponds to the case where $\frac{3}{3-\beta}< k<0$. By lemma 6, the revolution axis is the line $\{x_{1}=x_{2}=x_{3}\}$. An easy calculation shows that the hyperboloid is acute , i.e the upper sheet does not intersect the face of the chamber supported  by a given plane: $\{x_{i}=0\}$. Using the method of the proof one obtains $2^3+1$ connected components.
\end{example}

\begin{proof}[Theorem1.2]

\vspace{3pt}

The method to prove the assertion of theorem 1.2 is similar to the precedent one. 
By the substitution isomorphism defined in a chamber in $\mathbb{R}^n_{>0}$:
\[\phi: \mathbb{R}^n_{>0}\to \mathbb{R}^n_{>0}
\]
\[  \bf{x} \mapsto \bf{ x^{ \frac{1}{2} } } \]
we define a quadric $F=f\circ\phi$ defined on the chamber. 
By lemma 1, the center of the quadric is on the 1-dimensional set $\{x_{1}=x_{2}=\dots=x_{n}\}$.
Referring to the classification~\cite{Bu}, and assuming that the quadric is non degenerated with canonical form $(1,1,\dots,-1)$: this corresponds to an $(n-1)$-ellipsoid. Suppose that the center is strictly positive and that the compact quadric does not intersect the $(n-1)$-faces supported by $n$-planes of equation $ \{ x_{i}=0\}$ for given $i\in \{1,n\}$. On the other side, the ellipsoid intersects the $(n-1)$-faces supported by $x_{i}=x_{j}$. The inverse substitution map defines the quartic in the chamber . Therefore by lemma 9, the component in $\mathbb{R}^n_{+,+}$ is compact and does not intersect the $n$-planes in $\mathcal{A'}$. Since this hyperplane arrangement divides $V$ into $2^n$ regions, by part 2 of lemma 9 there exist $2^n$ components that are compact. 
\end{proof}
\begin{proof}[Theorem 1.3]

The method of the proof is the same as previously.
We consider the quadric defined in a chamber contained in $\mathbb{R}^n_{>0}$ and assume that the quadric is an $n$-ellipsoid with strictly positive center. 
Moreover, we assume that the $n$-ellipsoid intersects all of the $(n-1)$-planes of the chamber. Therefore, since the ellipsoid is of degree 2 and a plane of degree 1, both intersect in two $(n-2)$-dimensional manifolds, which are disjoint, in our case. After applying the substitution map and by lemma 9.1, the resulting surface is a disjoint union of two compact connected components which are concentric.  \end{proof}

\begin{proof}[Corollary 2, 3 \& Proposition 4,5]
\vspace{3pt}

\begin{enumerate}
\
\item $Corollary\ 2$ follows from the proof of theorem 1.1 
\item $Corollary\  3$ follows from the proofs of theorem 1.
\item $Proposition\ 4$ follows from~\cite{Co1}
\item $Proposition\ 5$ : the quadric defined in (\ref{E:3}) is an invariant polynomial under 
$CB_{n}$ action. The substitution isomorphism $\phi$ can be defined in a chamber as $\bf{x}\mapsto \bf{x}^{\frac{1}{2^m}}$ in $\mathbb{R}_{>0}^n$. Since this isomorphism does not modify the topology of the hyper-surface defined in the chamber, therefore we can use the same idea as in the theorem 1 and discuss the type of the quadric in the chamber following~\cite{Bu}. So the results for quadrics in theorem 1 hold in the case of invariant polynomials defined in (\ref{E:3}) with degree $2^{k}$ .
\end{enumerate}
\end{proof}
\begin{proof}[lemma 6]
Let us prove lemma 6. The quadric is defined in  matrix notation by: 
\[ \Lambda=\left(\renewcommand{\arraystretch}{1.5}
\begin{tabular}{c|c}
$D$ &$ \frac{1}{2}\mathbf{C}^{T}$\\ \hline
$\frac{1}{2}\mathbf{C}$ & $\Lambda_{0}$  
\end{tabular}\right),
 \quad \mathbf{C}=(C,C,\dots, C)^{T},\ \Lambda_{0}= \begin{pmatrix}
B & W  & \dots &W\\ 
W & B & \dots & W\\
\dots & \dots & \dots & \dots\\\
W & W & \dots & B  \\  
\end{pmatrix},\quad W=\frac{A+2B}{2}.
 \]
 
Any easy computation shows that the eigenvalues of $\Lambda_{0}$ are:
  \begin{equation}
  \lambda_{1}=\lambda_{2}=\dots =\lambda_{n-1}= B-W=-\frac{A}{2}, \ \lambda_{n}=B+(n-1)W.
  \end{equation}
 From the normalized  eigenvectors $\{\mathbf{v}_{1},\mathbf{v}_{2},\dots, \mathbf{v}_{n}\}$, one can obtain the columns of  the  transition matrix $P$ : 
\[ P=\left( \begin{array}{cccc}
-\frac{1}{\sqrt2} & -\frac{1}{\sqrt2} & & \frac{1}{\sqrt n}\\ 
0 & \frac{1}{\sqrt2} & &\frac{1}{\sqrt n}\\

\dots & \dots & \dots & \dots \\
\frac{1}{\sqrt2} & 0 & & \frac{1}{\sqrt n}  \\  
\end{array}\right).\]
The eigenplane of the $(n-1)$ eigenvalues  $\lambda_{1}= \dots = \lambda_{n-1}=-\frac{A}{2}$ is orthogonal to the axis  which corresponds  to simple eigenvalue $\lambda_{n}=B+(n-1)W$, so the quadric has the line generated by the vector $(1,1,\dots,1)$ as revolution axis. 
The center of the quadric is given by: 
\[\left(\frac{nC}{4((n-1)W-1)},\dots, \frac{nC}{4((n-1)W-1)}\right).\]  

Therefore the revolution axis is $\{ x_{1}=x_{2}=\dots =x_{n} \}$.
\end{proof}

\begin{proof}[lemma 7]
By lemma 6 the quadric has revolution axis and center on $\{ x_{1}=x_{2}=\dots =x_{n} \}$.
So, if the coordinates of the center are strictly positive then the $n$-hyperboloid intersects the chamber in two disjoint connected components. 
\end{proof}
\begin{proof}[lemma 8]
The proof is straightforward.
\end{proof}

\end{document}